\newcommand{\norm}[1]{\left\Vert#1\right\Vert}
\newcommand{\abs}[1]{\left\vert#1\right\vert}
\newcommand{\set}[1]{\left\{#1\right\}}
\newcommand{\R}{\mathbb R}
\newcommand{\eps}{\varepsilon}
\newcommand{\ex}[1]{\mathsf{E}\left[\,#1\,\right]}
\newcommand{\Ex}[1]{\mathsf{E}\Big[\,#1\,\Big]}
\newcommand{\ind}[1]{\mathbf{1}_{#1}}
\newcommand{\wotimes}{\mathop{\widehat\otimes}}
\newcommand{\mH}{\hat H}
\newcommand{\MH}{\check H}
\renewcommand\section{\@startsection{section}{1}{\z@}%
                                   {3.5ex \@plus 1ex \@minus .2ex}%
                                   {2.3ex \@plus .2ex}%
                                   {\centering\normalfont\bfseries}}
\def\@biblabel#1{#1.}
\theoremstyle{plain}
\newtheorem{theorem}{Theorem}
\newtheorem{lemma}{Lemma}
\newtheorem{proposition}{Proposition}
\theoremstyle{definition}
\newtheorem{definition}{Definition}
\theoremstyle{remark}
\newtheorem{remark}{Remark}
\let\le\leqslant
\let\ge\geqslant
\begin{document}
\thispagestyle{plain}

УДК 519.21

\setlength{\columnsep}{7mm}%
\setlength{\columnseprule}{.4pt}
\begin{multicols}{2}

\begin{flushleft}
Г.М. Шевченко$^1$, к. ф.-м. н., доцент

\vspace{0.2 cm}

{\bfseries  Локальні часи для мультидробових квадратично гауссівских процесів}
\end{flushleft}
\medskip

{\itshape У статті розглядаються мультидробові процеси, які задаються подвійним інтегралом Іто--Вінера з певним ядром і є узагальненнями 
мультидробового процесу Розенблатта. Для таких процесів доведено неперервність та існування квадратично інтегрованого локального часу.

\smallskip

Ключові слова: процес Розенблатта, мультидробовий процес, локальний час.}

$^1$Київський національний університет імені Тараса Шевченка, механіко-математичний факультет, 01601, Київ, вул.~Володимирська 64, e\nobreakdash-mail: zhora@univ.kiev.ua

\columnbreak

{\selectlanguage{english}

\begin{flushleft}
G.M. Shevchenko$^1$, Ph.D.,  Associate Prof.

\vspace{0.2 cm}

{\bfseries Local times for multifractional square Gaussian processes}
\end{flushleft}
\medskip

{\itshape We consider multifractional process given by double It\^o--Wiener integrals, which generalize the multifractional Rosenblatt process. We prove that this process is continuous and has a square integrable local time.

\smallskip

Keywords: Rosenblatt process, multifractional process, local time. \vfill}

$^1$Taras Shevchenko National University of Kyiv, Department of Mechanics and Mathematics, 01601, Kyiv, Volodymyrska str., 64, e\nobreakdash-mail: zhora@univ.kiev.ua
}

\end{multicols}

\bigskip
Communicated by Prof.  Kozachenko Yu.V.

\selectlanguage{english}
\setlength{\columnsep}{4mm}%
\setlength{\columnseprule}{0pt}
\begin{multicols}{2}

\section*{Introduction}

Fractional processes play an important role in modeling the long-range dependence property. In view of this, they find numerous applications, most notably in financial mathematics, geophysics, hydrology, telecommunication research etc. The most popular among fractional processes is the Gaussian one, viz.\ the fractional Brownian motion. However, it has several drawbacks. Firstly, its increments are stationary, which does not allow to model processes whose properties vary essentially as time flows. Secondly, it is selfsimilar, so has similar properties on different time scales, yet only few real-world processes have this property. Lastly, it has Gaussian distribution, so its tails are extremely light. 

In view of these drawbacks, several authors considered some generalizations of fractional processes. To address the first two drawbacks, it is useful to introduce some multifractional processes, by letting the memory parameter to vary with time. On this way, some multifractional counterparts of fractional Brownian motion were proposed in \cite{BenassiJaffardRoux97,PeltierLevyVehel,ralchenko},
to name only a few. These processes are also Gaussian, so they do not solve the light tails problem. Therefore, in \cite{shevrosen}, a multifractional analogue of Rosenblatt process was defined. The Rosenblatt process has a generalized chi-square distribution, so its tails are heavier than Gaussian ones. The Rosenblatt process first appeared in \cite{rosenblatt}. It was investigated in many articles, we cite only  \cite{tudor-torres}, where some applications of the Rosenblatt process in financial modeling were considered. To allows for more flexibility, recently in \cite{tudor-maejima} a generalization of Rosenblatt process was proposed; the authors also proved that this process appears naturally as a limit of some normalized cross-variation of two different fractional Brownian motions. In the present paper we consider a multifractional analogue of the generalized Rosenblatt process. 

The paper is organized as follows. In Section 1, we give necessary information about double stochastic integrals and local times. In Section 2, we introduce the main object, the multifractional generalized Rosenblatt process, and show that it has a H\"older continuous modification. Finally, in Section 3, we prove that this process has a square integrable local time.

\section{Preliminaries}

Throughout the article, the symbol $C$ will denote a generic constant whose value is not important and may change from line to line.

First we give provide essential information on double stochastic integrals. More information can be found in  \cite{nualart}.

Let $W$ be the standard Wiener measure on the real line, i.e. a random measure with independent values on disjoint sets,
such that for a Borel set $A$ of a finite Lebesgue measure $\lambda(A)$, $W(A)$ has a centered Gaussian distribution with the variance $\lambda(A)$.

The double stochastic It\^o--Wiener integral is constructed as follows. Let $\widehat L^2(\R^2)$ be the Hilbert space of symmetric square integrable functions on $\R^2$ with the norm $\norm{\cdot}$. Suppose  $f\in \widehat L^2(\R^2)$ is a symmetric simple function of the form
\begin{equation}
\label{simple}
f(x,y) = \sum_{k,j=1}^n a_{k\,j}\ind{A_k}(x)\ind{A_j}(y),
\end{equation}
where $A_1,\dots,A_n$ are disjoint sets in $\R$ of a finite Lebesgue measure, $a_{k\,j}=a_{j\,k}$ for all $k,j=1,\dots,n$, $a_{k\,k}=0$ for all $k=1,\dots,n$. Then we define
\begin{gather*}
I_2(f) = \iint_{\R^2} f(x,y) W(dx)\, W(dy) \\= \sum_{k,j=1}^n a_{k\,j}W(A_k)W(A_j).
\end{gather*}
The map $I_2$ can be extended to a linear isometry on $\widehat L^2(\R^2)$. Its properties are the following.

1. Let $a,b\in L^2(\R)$ be orthogonal and $f=a\wotimes b\in \widehat L^2(\R^2)$ be their symmetric tensor product, that is
$f(x,y) =  \big(a(x)b(y) + a(y)b(x)\big)/2$. Then $I_2(h) = I_1(a)I_1(b)$.

2. For all $a\in L^2(\R)$, $I_2(a\wotimes a) = (I_1(a))^2 - \norm{a}^2_{L^2(\R)}$.

3. Define, for a function $f\in \widehat L^2(\R^2)$, a Hermitian operator 
$$
(M_f a)(x) = \int_\R f(x,y) a(y) dy.
$$
Let $\lambda_{k,f}$, $k\ge 1$ be its eigenvalues ordered by absolute value (largest first) and  
$\varphi_{k,f}$, $k\ge 1$ be the corresponding eigenfunctions. Then, since
$$
f(x,y) = \sum_{k\ge 1}\lambda_{k,f} \varphi_{k,f}\wotimes\varphi_{k,f},
$$
we have
$$
I_2(f) = \sum_{k\ge 1} \lambda_{k,f} (\zeta^2_k-1),
$$
with $\zeta_k=I_1(\varphi_{k,f})$ being independent standard Gaussians. Therefore, the characteristic function of $I_2(f)$ is
\begin{equation}
\begin{gathered}
\label{char1}
\ex{e^{i\alpha I_2(f)}} = \prod_{k\ge 1} \ex{e^{i\alpha \lambda_{k,f}(\zeta_k^2-1)}} \\= \prod_{k\ge 1} \frac{e^{-i\alpha \lambda_{k,f}}}{\sqrt{1-2i\alpha \lambda_{k,f}}},
\end{gathered}
\end{equation}
and it admits the estimate
\begin{equation}
\begin{gathered}
\label{charineq}
\abs{\ex{e^{i\alpha I_2(f)}}} = \left(\prod_{m\ge 1} \big(1+4\alpha^2 \lambda^2_{k,f}\big)\right)^{-1/4}\\
\le \bigg(1+4\alpha^2 \sum_{k\ge 1}\lambda^2_{k,f} + 16\alpha^4 \sum_{j<k}\lambda^2_{j,f}\lambda^2_{k,f}\\ + 64 \alpha^6 \sum_{j<k<l}\lambda^2_{j,f}\lambda^2_{k,f}\lambda^2_{l,f}\bigg)^{-1/4}.
\end{gathered}
\end{equation}
Finally, we mention that by \cite[Corollary 7.36]{janson}, for each $m\ge 2$ there exists a constant $C_m$ such that 
for every $f\in\widehat L^2(\R^2)$, 
\begin{equation}\label{I2f-moments}
\ex{I_2(f)^m} \le C_m \left(\ex{I_2(f)^2}\right)^{m/2}.
\end{equation}

Further we give some basics about local times.

Let $\{Z_t, t\ge 0\}$ be a separable random process. The \textit{occupation measure} for $Z$ is defined as follows: for Borel $A\subset [0,\infty)$,  $B \subset\R$,
\begin{equation*}
\mu(A,B) = \lambda(\{s\in A, Z_s \in B\}),
\end{equation*}
and the \textit{local time} is the Radon--Nikodym derivative
$L(A,x) = \frac{d\mu(A,\cdot)}{d\lambda}(x)$.

We need the following proposition from \cite{berman}.
\begin{proposition}\label{bermcond}
A process $\set{Z_t,t\ge 0}$ has a local time $L(A,x)\in L^2(\R\times \Omega)$ iff
\begin{equation}
\label{ltx^2}
\int_\R\int_A\int_A \ex{e^{iv(Z_t-Z_s)}}ds\,dt\,dv<\infty.
\end{equation}
\end{proposition}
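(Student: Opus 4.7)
The plan is to realise the occupation measure as a random finite Borel measure on $\R$ and then characterise square-integrability of its Radon--Nikodym derivative through Plancherel's theorem applied to its Fourier transform. Fix a bounded Borel $A \subset [0,\infty)$ and view $\mu(A,\cdot)$ as a random measure on $\R$ whose total mass is $\lambda(A)$. Its (random) Fourier transform is computed directly from the definition of occupation measure:
\begin{equation*}
\widehat\mu(A,v) = \int_\R e^{ivx}\mu(A,dx) = \int_A e^{ivZ_s}\,ds,
\end{equation*}
so that $\abs{\widehat\mu(A,v)}^2 = \int_A\int_A e^{iv(Z_t-Z_s)}\,ds\,dt$.

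Next I would invoke the standard fact that a finite Borel measure on $\R$ is absolutely continuous with an $L^2$ density if and only if its Fourier transform lies in $L^2(\R)$, in which case the Parseval identity gives
\begin{equation*}
\int_\R L(A,x)^2\,dx = \frac{1}{2\pi}\int_\R \abs{\widehat\mu(A,v)}^2\,dv.
\end{equation*}
Integrating over $\Omega$ and using Fubini (the integrand is nonnegative, so this is unconditional) turns the condition $L(A,\cdot)\in L^2(\R\times\Omega)$ into
\begin{equation*}
\int_\R\int_A\int_A \ex{e^{iv(Z_t-Z_s)}}\,ds\,dt\,dv < \infty,
\end{equation*}
which is exactly \eqref{ltx^2}. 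The ``if'' direction follows by reading the same chain backwards: finiteness of the triple integral forces $\widehat\mu(A,\cdot)\in L^2(\R)$ almost surely, hence by Plancherel the measure $\mu(A,\cdot)$ admits an $L^2$ density; measurability of $L(A,x)$ jointly in $(x,\omega)$ is obtained along the way via a Fourier inversion in the $L^2$ sense.

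The only delicate point is the joint measurability and the interchange of Fourier inversion with probability. The cleanest way around it is to work with the inverse Fourier transform pathwise on the event where $\widehat\mu(A,\cdot)\in L^2(\R)$ and then use the Plancherel isometry to transfer identities between $L^2(\R\times\Omega)$ and its Fourier image; this avoids having to prove existence of $L$ directly from an approximation by delta-functions, which is the more traditional but less efficient route. A careful reader can consult \cite{berman} for the technical details.
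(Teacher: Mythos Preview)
The paper does not supply its own proof of this proposition: it is quoted verbatim as a result from \cite{berman}, so there is nothing to compare against at the level of argument. Your sketch is the classical Berman--Plancherel argument and is correct in outline; indeed it is essentially the proof one finds in \cite{berman}. The identification $\widehat\mu(A,v)=\int_A e^{ivZ_s}\,ds$, the use of the $L^2$ Fourier criterion for absolute continuity of a finite measure, and the Fubini step are all standard and sound. Your closing remark that joint measurability and the pathwise-to-$L^2(\Omega)$ passage are the only genuinely technical points, and that one may defer them to \cite{berman}, is exactly how the paper itself handles the matter.
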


\section{Generalized multifractional Rosenblatt process and its basic properties}

The generalized Rosenblatt process was introduced in \cite{tudor-maejima}. For $H_1,H_2\in(1/2,1)$, define the kernel
\begin{gather*}
K_{H_1,H_2}(t,x,y) = (s-x)_+^{H_1/2-1}(s-y)_+^{H_2/2-1}\\
+ (s-x)_+^{H_2/2-1}(s-y)_+^{H_1/2-1} .
\end{gather*}
\begin{definition}
For  $H_1,H_2\in (1/2,1)$, the process
\begin{equation*}
Y_t^{H_1,H_2} = \iint_{\R^2} \int_{0}^t K_{H_1,H_2}(s,x,y) ds\, W(dx)W(dy)
\end{equation*}
is the \textit{generalized Rosenblatt process} with parameters $H_1$ and $H_2$.
\end{definition}
It was shown in \cite{tudor-maejima} that $Y^{H_1,H_2}$ is an $H/2$-selfsimilar process with stationary increments.

\begin{remark}
In fact, the definition in \cite{tudor-maejima} differs from the one here by some normalizing constant. For the sake of technical simplicity, we omit this constant.
\end{remark}

Now we define a multifractional counterpart of the generalized Rosenblatt process. Let, for a fixed $T>0$,  $H_i\colon[0,T]\to (1/2,1)$, $i=1,2$, be continuous functions. 

\begin{definition}
The process
\begin{gather*}
X_t = Y_t^{H_1(t),H_2(t)} \\
= \iint_{\R^2} \int_{0}^t K_{H_1(t),H_2(t)}(s,x,y) ds\, W(dx)W(dy)
\end{gather*}
will be called a \textit{multifractional generalized Rosenblatt process}. 
\end{definition}
Denote for $i=1,2$ \ $\mH_i = \min_{t\in[0,T]} H_i(t)$, $\MH_i = \max_{t\in[0,T]} H_i(t)$. Let also $\mH= (\mH_1+ \mH_2)/2$, $\MH= (\MH_1+\MH_2)/2$.

The following hypotheses about the functions $H_1$ and $H_2$ will be assumed throughout the article: there exists  some $\gamma>\MH$ such that
\begin{equation*}
\abs{H_i(t)-H_i(s)}\le \abs{t-s}^\gamma
\end{equation*}
for $i=1,2$ and $t,s\in [0,T]$.

Further we establish some continuity properties of the process $X$. 
We start with the following lemma, which provides continuity of the process $Z^{H_1,H_2}$ in its parameters. 
\begin{lemma}\label{lem:Hcont}
For all $H^{\vphantom{'}}_1,H_1'\in[\mH_1,\MH_1],H^{\vphantom{'}}_2,H_2'\in[\mH_2,\MH_2]$, $t\in[0,T]$
\begin{equation*}
\Ex{\big({Y^{H^{\vphantom{'}}_1,H^{\vphantom{'}}_2}_t - Y^{H_1',H_2'}_t}\big)^2} \le C\sum_{i=1}^2\big({H^{\vphantom{}}_i-H_i'}\big)^2.
\end{equation*}
\end{lemma}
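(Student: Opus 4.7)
The plan is to apply the $L^2$ isometry $\ex{I_2(f)^2}=2\norm{f}^2_{\widehat L^2(\R^2)}$ of the double It\^o--Wiener integral, which reduces the claim to the norm estimate
$$
\norm{\int_0^t\big[K_{H^{\vphantom{'}}_1,H^{\vphantom{'}}_2}(s,\cdot,\cdot) - K_{H_1',H_2'}(s,\cdot,\cdot)\big]ds}^2_{\widehat L^2(\R^2)} \le C\sum_{i=1}^2\big(H^{\vphantom{'}}_i-H_i'\big)^2.
$$
By the telescoping decomposition
$K_{H^{\vphantom{'}}_1,H^{\vphantom{'}}_2}-K_{H_1',H_2'} = (K_{H^{\vphantom{'}}_1,H^{\vphantom{'}}_2}-K_{H_1',H^{\vphantom{'}}_2}) + (K_{H_1',H^{\vphantom{'}}_2}-K_{H_1',H_2'})$
and the elementary inequality $(a+b)^2\le 2(a^2+b^2)$, it suffices to treat a change in one parameter at a time; for concreteness I consider the case $H_2'=H_2$ and aim to bound the norm by $C(H^{\vphantom{'}}_1-H_1')^2$.

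For this single-parameter difference, I would rewrite each occurrence of a difference of powers via the fundamental theorem of calculus,
$$
(s-x)_+^{H^{\vphantom{'}}_1/2-1} - (s-x)_+^{H_1'/2-1} = \tfrac12\int_{H_1'}^{H^{\vphantom{'}}_1}(s-x)_+^{h/2-1}\ln(s-x)_+\,dh,
$$
and absorb the logarithm through the elementary estimate $\abs{\ln u}\le C_\eps(u^\eps+u^{-\eps})$, valid for every $u>0$ and $\eps>0$. This yields a pointwise majorization
$\abs{K_{H^{\vphantom{'}}_1,H^{\vphantom{'}}_2}(s,x,y)-K_{H_1',H^{\vphantom{'}}_2}(s,x,y)} \le C\abs{H^{\vphantom{'}}_1-H_1'}\,\widetilde K(s,x,y)$,
where $\widetilde K$ is a finite sum of kernels with the same multiplicative structure as $K$ but with each exponent of the form $h/2-1$ replaced by one of $\mH_1/2-1\pm\eps$ or $\MH_1/2-1\pm\eps$.

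The resulting squared norm is then expanded by Fubini into a finite sum of integrals of the form
$$
\int_0^t\int_0^t\int_{\R^2}(s-x)_+^{a-1}(u-x)_+^{b-1}(s-y)_+^{c-1}(u-y)_+^{d-1}\,dx\,dy\,ds\,du,
$$
whose inner integrations in $x$ and in $y$ are carried out by the classical beta identity
$$
\int_\R(s-x)_+^{a-1}(u-x)_+^{b-1}\,dx = B_{a,b}^{\pm}\abs{s-u}^{a+b-1},\qquad a,b>0,\ a+b<1.
$$
What remains is a convergent integral $\int_0^t\!\int_0^t\abs{s-u}^{\gamma}\,ds\,du$ with $\gamma=a+b+c+d-2>-1$, producing the required bound $C(H^{\vphantom{'}}_1-H_1')^2$.

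\textbf{Main obstacle.} The principal technical point is choosing a single $\eps>0$ so that \emph{all} exponents appearing after the $\pm\eps$ shift still satisfy both $a,b>0$ and $a+b<1$, uniformly in $h\in[\mH_1,\MH_1]$ and $H_2\in[\mH_2,\MH_2]$. This is possible because the compactness of $[0,T]$ and the open inclusion $H_i([0,T])\subset(1/2,1)$ force the strict bounds $\mH_i>1/2$ and $\MH_i<1$, leaving room for a positive $\eps$.
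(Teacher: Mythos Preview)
Your proposal is correct and follows essentially the same route as the paper: telescope to a one-parameter change, apply the $L^2$ isometry of $I_2$, rewrite the difference of powers via the fundamental theorem of calculus, and absorb the logarithm through $\abs{\ln u}\le C_\eps(u^\eps+u^{-\eps})$ to obtain a pointwise majorant by shifted kernels. The only divergence is in the final step: the paper recognizes the majorant as $K_{\mH_1-\eps,H_2}+K_{\MH_1+\eps,H_2}$ and then appeals to the finiteness and continuity in the parameters of $\ex{(Y_t^{H_1,H_2})^2}$ (quoted from Maejima--Tudor), whereas you carry out the $dx\,dy$ integration explicitly via the beta identity and then integrate $\abs{s-u}^{\gamma}$ over $[0,t]^2$ --- a self-contained variant of the same computation.
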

\begin{proof}
Start by estimating
\begin{gather*}
\Ex{\big({Y^{H^{\vphantom{'}}_1,H^{\vphantom{'}}_2}_t - Y^{H_1',H_2'}_t}\big)^2} \\\le 
2 \bigg(\Ex{\big({Z^{H^{\vphantom{'}}_1,H^{\vphantom{'}}_2}_t - Y^{H_1',H_2^{\vphantom{'}}}_t}\big)^2} \\+ \Ex{\big({Z^{H_1',H^{\vphantom{'}}_2}_t - Y^{H_1',H_2'}_t}\big)^2}\bigg).
\end{gather*}
By symmetry, it is enough to show that
 \begin{equation*}
\ex{\big({Y^{H^{\vphantom{'}}_1,H^{\vphantom{'}}_2}_t - Y^{H_1',H_2^{\vphantom{'}}}_t}\big)^2} \le C\big({H^{\vphantom{}}_1-H_1'}\big)^2.
\end{equation*}
Denote $$f_{H_1,H_2}(s,x,y) = (s-x)_+^{H_1/2-1}(s-y)_{+}^{H_2/2-1}.$$
Write
\begin{gather*}
\ex{\big({Y^{H^{\vphantom{'}}_1,H^{\vphantom{'}}_2}_t - Y^{H_1',H_2'}_t}\big)^2}\\
=\ex{\left(\iint_{\R^2}\int_0^t \Delta(s,x,y) ds\, W(dx)W(dy)\right)^2}\\
= \iint_{\R^2}\left(\int_0^t \Delta(s,x,y) ds\right)^2 dx\,dy,
\end{gather*}
where 
\begin{gather*}
\Delta(s,x,y)=K_{H^{\vphantom{'}}_1,H^{\vphantom{'}}_2}(s,x,y) - K_{H'_1,H^{\vphantom{'}}_2}(s,x,y)\\
= \frac{1}2 \int^{H_1}_{H_1'} \big(f_{h,H_2}(s,x,y) \log (s-x)_+ 
\\ + f_{H_2,h}(s,x,y) \log (s-y)_+\big)dh.
\end{gather*}
Now take some $\eps\in\big(0,(1-\MH_1)\wedge (\mH_1-1/2)\big)$. Since $\log x \le C (x^\eps + x^{-\eps})$, we can estimate
\begin{gather*}
\abs{\Delta(s,x,y)} \le C\big|{H^{\vphantom{'}}_1-H_1'}\big|
\big(f_{\mH_1-\eps,H_2}\\
+f_{\MH_1+\eps,H_2}+f_{H_2,\mH_1-\eps,H_2}
+f_{H_2,\MH_1+\eps}\big) (s,x,y)\\
= C\big|{H^{\vphantom{'}}_1-H_1'}\big|\big(
K_{\mH_1-\eps,H_2} + K_{\MH_1+\eps,H_2}
\big)(s,x,y).
\end{gather*}
For any $H_1,H_2\in(1/2,1)$, $t\in[0,t]$, $x,y\in\R$
\begin{gather*}
\iint_{\R^2}\left(\int_0^t K_{H_1,H_2} (s,x,y)ds\right)^2 dx\,dy\\ = \Ex{\big(Y^{H_1,H_2}_t\big)^2} <\infty.
\end{gather*}
moreover, this expression is continuous in $H_1$ and $H_2$ (see \cite[Lemma 2.2]{tudor-maejima}), hence, bounded for $H_2\in[\mH_2,\MH_2]$ and $H_1\in \big\{\mH_1-\eps,\MH_1+\eps\big\}$. Consequently,
\begin{gather*}
\Ex{\big({Y^{H^{\vphantom{'}}_1,H^{\vphantom{'}}_2}_t - Y^{H_1',H_2'}_t}\big)^2}
\le  C\big|{H^{\vphantom{'}}_1-H_1'}\big|,
\end{gather*}
as required.
\end{proof}
Now we are able to formulate result about the generalized multifractional Rosenblatt process $X$.
\begin{proposition}\label{prop:ex(xt-xs)^2}
There exist positive constants $C_1$ and $C_2$ such that for all $s,t\in[0,T]$ sufficiently close,
$$
 \ex{(X_t-X_s)^2}\ge C_1\abs{t-s}^{H_1(t)+ H_2(t)}
$$
and 
$$
 \ex{(X_t-X_s)^2}\le C_2\abs{t-s}^{H_1(t)+ H_2(t)}.
$$
Consequently, for all $s,t\in[0,T]$
$$
\ex{(X_t-X_s)^2}\ge C_1'\abs{t-s}^{2\MH}
$$
and 
$$
\ex{(X_t-X_s)^2}\le C_2'\abs{t-s}^{2\mH}
$$
with some positive constants $C_1'$ and $C_2'$.
\end{proposition}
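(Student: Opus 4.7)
The natural strategy is to split the increment of $X$ into two parts, one that only changes the parameters and one that is an increment of a classical (constant-parameter) generalized Rosenblatt process:
\begin{gather*}
X_t-X_s = A+B,\\
A = Y_t^{H_1(t),H_2(t)}-Y_t^{H_1(s),H_2(s)},\\
B = Y_t^{H_1(s),H_2(s)}-Y_s^{H_1(s),H_2(s)}.
\end{gather*}
The point of this decomposition is that $A$ is handled by Lemma \ref{lem:Hcont} together with the H\"older hypothesis on $H_1,H_2$, while $B$ is an increment of a selfsimilar process with stationary increments so its $L^2$-norm can be computed exactly.

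For $A$, Lemma \ref{lem:Hcont} and $\abs{H_i(t)-H_i(s)}\le\abs{t-s}^\gamma$ give
$\ex{A^2}\le C\abs{t-s}^{2\gamma}$. For $B$, selfsimilarity of index $(H_1(s)+H_2(s))/2$ and stationarity of increments of $Y^{H_1(s),H_2(s)}$ yield
$$
\ex{B^2} = \abs{t-s}^{H_1(s)+H_2(s)}\, \ex{\big(Y^{H_1(s),H_2(s)}_1\big)^2}.
$$
The factor $\ex{(Y^{h_1,h_2}_1)^2}=\norm{\int_0^1 K_{h_1,h_2}(s,\cdot,\cdot)ds}^2$ is strictly positive (the kernel is nonnegative and nontrivial) and, by Lemma 2.2 of \cite{tudor-maejima}, continuous in $(h_1,h_2)$, hence is bounded above and away from zero on the compact rectangle $[\mH_1,\MH_1]\times[\mH_2,\MH_2]$. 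Therefore
$c_1\abs{t-s}^{H_1(s)+H_2(s)}\le \ex{B^2}\le c_2\abs{t-s}^{H_1(s)+H_2(s)}$.

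Now combine. The upper bound follows at once from the triangle inequality in $L^2(\Omega)$ and the estimate $2\gamma>2\MH\ge H_1(s)+H_2(s)$, so that $\abs{t-s}^{2\gamma}$ is dominated by $\abs{t-s}^{H_1(s)+H_2(s)}$ for $\abs{t-s}$ small. For the lower bound, the reverse triangle inequality gives
$$
\sqrt{\ex{(X_t-X_s)^2}} \ge \sqrt{\ex{B^2}}-\sqrt{\ex{A^2}},
$$
and since $\sqrt{\ex{A^2}}\le C\abs{t-s}^\gamma$ is of strictly smaller order than $\sqrt{\ex{B^2}}\ge c\abs{t-s}^{(H_1(s)+H_2(s))/2}$, the right-hand side is bounded below by $\tfrac12\sqrt{\ex{B^2}}$ once $\abs{t-s}$ is sufficiently small. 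Squaring produces the two-sided estimate with exponent $H_1(s)+H_2(s)$; replacing $s$ by $t$ changes the exponent by $O(\abs{t-s}^\gamma)$, and the identity $\abs{t-s}^{O(\abs{t-s}^\gamma)}\to 1$ as $\abs{t-s}\to 0$ absorbs this into the constants.

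Finally, for the \emph{global} bounds on $[0,T]$: for $\abs{t-s}\le 1$ one has $\abs{t-s}^{2\MH}\le \abs{t-s}^{H_1(t)+H_2(t)}\le \abs{t-s}^{2\mH}$, so the local estimate already yields the claim in the small-$\abs{t-s}$ regime; for $\abs{t-s}$ bounded away from $0$, $\ex{(X_t-X_s)^2}$ is continuous in $(s,t)$ (by Lemma \ref{lem:Hcont} applied at $t$ and $s$ plus continuity of $\ex{(Y^{h_1,h_2}_{t}-Y^{h_1,h_2}_s)^2}$) and strictly positive, hence bounded above and below by positive constants on the corresponding compact subset of $[0,T]^2$. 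Adjusting the constants $C_1',C_2'$ completes the argument. The only delicate point is verifying that $A$ is of strictly smaller order than $B$, which is precisely why the hypothesis $\gamma>\MH$ was imposed.
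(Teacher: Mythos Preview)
Your proof is correct and follows essentially the same route as the paper: split $X_t-X_s$ into a parameter-change piece controlled by Lemma~\ref{lem:Hcont} and a constant-parameter increment handled by selfsimilarity and stationarity, then use that $\gamma>\MH$ makes the former of strictly smaller order. The only cosmetic difference is that the paper anchors the increment at the parameter $\overline H(t)$ rather than $\overline H(s)$, so it obtains the exponent $H_1(t)+H_2(t)$ directly without your final exponent-swap step; your version is otherwise more explicit (uniformity of $\ex{(Y_1^{h_1,h_2})^2}$ on the compact rectangle, the reverse triangle inequality for the lower bound, and the global extension), all of which the paper leaves implicit.
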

\begin{proof}
From the self-similarity and stationary increments property of $Y^{H_1,H_2}$, we have
$$
\Ex{\big(Y^{H_1,H_2}_t - Y^{H_1,H_2}_s\big)^2} = K\abs{t-s}^{H_1 + H_2}
$$
with $K = \ex{\big(Y_1^{H_1,H_2}\big)^2}$. 

For brevity, denote $Y^{\overline H(t)} = Y^{H_1(t),H_2(t)}$.
Write 
\begin{gather*}
\ex{(X_t-X_s)^2} = \Ex{\big(Y^{\overline H(t)}_t - Y^{\overline H(s)}_s\big)^2}\le\\
 2\Ex{\big(Y^{\overline H(t)}_t - Y^{\overline H(t)}_s\big)^2}+2\Ex{\big(Y^{\overline H(t)}_s - Y^{\overline H(s)}_s\big)^2} \\
= K\abs{t-s}^{H_1(t) + H_2(t)} + \Ex{\big(Y^{\overline H(t)}_s - Y^{\overline H(s)}_s\big)^2}.
\end{gather*}
By Lemma~\ref{lem:Hcont}, 
\begin{gather*}
\Ex{\big(Y^{\overline H(t)}_s - Y^{\overline H(s)}_s\big)^2}\le 
C\sum_{i=1}^2 \big|H_i(t)-H_i(s)\big|^2\\
\le C\abs{t-s}^{2\gamma} = o(\abs{t-s}^{2\MH}), \abs{t-s}\to 0,
\end{gather*}
whence we derive the required statement.
\end{proof}
As a corollary, we get H\"older continuity of the generalized multifractional Rosenblatt process. 
\begin{theorem}
For any $\alpha<\mH$, the process $X$ has a modification, which is  H\"older continuous of order $\alpha$.
\end{theorem}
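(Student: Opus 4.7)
The plan is to apply Kolmogorov's continuity criterion, exploiting that $X_t - X_s$ lives in the second Wiener chaos so that its higher moments are controlled by the second-moment bound supplied by Proposition~\ref{prop:ex(xt-xs)^2}.

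First, I would observe that for every $t\in[0,T]$ one may write $X_t = I_2(g_t)$ with the symmetric square-integrable kernel
$$
g_t(x,y) = \int_0^t K_{H_1(t),H_2(t)}(u,x,y)\,du,
$$
so that by linearity $X_t - X_s = I_2(g_t - g_s)$ also belongs to the second Wiener chaos. Consequently the hypercontractivity bound \eqref{I2f-moments} applies: for every even integer $m\ge 2$,
$$
\ex{(X_t - X_s)^m} \le C_m \big(\ex{(X_t - X_s)^2}\big)^{m/2}.
$$
Combining this with the upper bound $\ex{(X_t - X_s)^2}\le C_2'\abs{t-s}^{2\mH}$ from Proposition~\ref{prop:ex(xt-xs)^2} yields
$$
\ex{(X_t - X_s)^m} \le C_m' \abs{t-s}^{m\mH}.
$$

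Now, given $\alpha<\mH$, I would choose an even integer $m$ so large that $m(\mH-\alpha)>1$, i.e. $m\mH > 1+m\alpha$. Then Kolmogorov's continuity criterion furnishes a modification of $X$ that is H\"older continuous of every order strictly less than $(m\mH-1)/m$; since this number exceeds $\alpha$, the modification is in particular $\alpha$-H\"older on $[0,T]$.

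The only point that requires a moment of thought is checking that $X_t - X_s$ is genuinely an $I_2$-integral of a single symmetric $L^2$ kernel, which is needed to invoke \eqref{I2f-moments}; this is immediate from the definition of $X$ and linearity of $I_2$. Everything else is a routine matching of exponents, so I do not expect a serious obstacle.
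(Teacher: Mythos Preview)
Your proposal is correct and follows essentially the same route as the paper's own proof: combine the second-moment bound of Proposition~\ref{prop:ex(xt-xs)^2} with the hypercontractivity estimate \eqref{I2f-moments} to control $\ex{(X_t-X_s)^m}$, then invoke the Kolmogorov--Chentsov theorem with $m>(\mH-\alpha)^{-1}$. Your added remark that $X_t-X_s=I_2(g_t-g_s)$ is a genuine second-chaos element is a worthwhile clarification, and your restriction to even $m$ is harmless though unnecessary, since \eqref{I2f-moments} is stated for all $m\ge2$.
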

\begin{proof}
Using Proposition~\ref{prop:ex(xt-xs)^2} and the estimate \eqref{I2f-moments}, we get 
$$
\ex{(X_t-X_s)^m}\le C\abs{t-s}^{m\mH}
$$
for all $m\ge 1$. Taking $m>(\mH-\alpha)^{-1}$, we obtain the statement from the Kolmogorov--Chentsov theorem.
\end{proof}
\section{Existence of local time for generalized Rosenblatt process}
In this section we prove the existence of local time for both generalized Rosenblatt process and multifractional generalized Rosenblatt process. 
\begin{theorem}\label{thm:lt}
For each $H_1,H_2\in (1/2,1)$, the process $Y^{H_1,H_2}$ has a square integrable local time on $[0,T]$.
\end{theorem}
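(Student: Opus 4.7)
My plan is to verify Berman's integrability criterion from Proposition~\ref{bermcond} for $Z=Y^{H_1,H_2}$ and $A=[0,T]$; a sufficient condition is the stronger absolute-value bound
$$\int_\R\int_0^T\int_0^T\big|\ex{e^{iv(Y_t-Y_s)}}\big|\,ds\,dt\,dv<\infty.$$
Using the $(H_1+H_2)/2$-selfsimilarity and stationary increments of $Y^{H_1,H_2}$, I have $Y_t-Y_s\stackrel{d}{=}|t-s|^{(H_1+H_2)/2}Y_1^{H_1,H_2}$. Writing $\phi$ for the characteristic function of $Y_1^{H_1,H_2}$ and changing variables $u=v|t-s|^{(H_1+H_2)/2}$ in the $v$-integral, the triple integral factors as
$$\left(\int_\R|\phi(u)|\,du\right)\int_0^T\int_0^T|t-s|^{-(H_1+H_2)/2}\,ds\,dt.$$
The time integral is finite since $(H_1+H_2)/2<1$, so the problem reduces to showing $\phi\in L^1(\R)$.

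For this I would apply the characteristic function estimate~\eqref{charineq}. Representing $Y_1^{H_1,H_2}=I_2(g)$ with the symmetric kernel
$$g(x,y)=\int_0^1 K_{H_1,H_2}(s,x,y)\,ds$$
and letting $\{\lambda_k\}_{k\ge1}$ denote the eigenvalues of the Hermitian operator $M_g$, \eqref{charineq} yields
$$|\phi(u)|\le\left(1+64u^6\sum_{j<k<l}\lambda_j^2\lambda_k^2\lambda_l^2\right)^{-1/4}.$$
As soon as the triple eigenvalue sum on the right-hand side is strictly positive, this bound decays like $|u|^{-3/2}$ at infinity and therefore belongs to $L^1(\R)$. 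The required positivity is equivalent to $M_g$ possessing at least three nonzero eigenvalues.

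The main obstacle is thus establishing this rank condition on $M_g$. I expect to prove the stronger property that $M_g$ has infinite rank, as follows: for any $N$ distinct points $y_1,\ldots,y_N\in(0,1)$ the sections $g(\cdot,y_j)\in L^2(\R)$ have singularities located at the distinct points $y_j$, inherited from the factor $(s-y_j)_+^{H_i/2-1}$ inside the integral defining $g$, hence are linearly independent. A symmetric kernel of finite rank $N$ would force all its sections to lie in a common $N$-dimensional subspace, which contradicts linear independence for $N$ arbitrarily large. Combining infinite rank of $M_g$ with the decay estimate yields $\phi\in L^1(\R)$, and together with the factorization above this verifies Berman's criterion and concludes the proof.
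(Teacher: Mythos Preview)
Your argument follows the paper's proof almost exactly: both invoke Berman's criterion, exploit self-similarity and stationary increments to reduce to the characteristic function of $Y_1^{H_1,H_2}=I_2(g)$, apply the bound~\eqref{charineq} with three eigenvalues to obtain $|u|^{-3/2}$ decay, and then integrate after the change of variables. The only genuine difference is how you show that $M_g$ has rank at least three. The paper feeds in exponential test functions $f_a(x)=e^{ax}\ind{x\le 1}$, computes $M_g f_a$ explicitly, and argues that these images are linearly independent for distinct $a>0$ via their asymptotics as $x\to-\infty$. You instead look at sections $g(\cdot,y_j)$ for distinct $y_j\in(0,1)$ and use that each blows up precisely at $x=y_j$; this is a perfectly valid alternative and arguably more transparent.

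Two small points worth tightening. First, your stated reason for the singularity is slightly off: the factor $(s-y_j)_+^{H_i/2-1}$ is singular in $s$, not in $x$. The divergence of $g(x,y_j)$ as $x\to y_j$ really comes from the collision of the two integrable singularities $(s-x)_+^{\alpha_1}$ and $(s-y_j)_+^{\alpha_2}$ in the $s$-integral, producing the non-integrable power $(s-y_j)^{\alpha_1+\alpha_2}$ with $\alpha_1+\alpha_2<-1$; one can check $g(x,y_j)\sim C|x-y_j|^{H-1}$ with $H=(H_1+H_2)/2$. Second, since the finite-rank decomposition $g=\sum_{k\le N}\lambda_k\varphi_k\otimes\varphi_k$ holds only in $L^2(\R^2)$, the statement ``all sections lie in an $N$-dimensional space'' is valid only for a.e.\ $y$; but that is enough, since you may pick your $N+1$ distinct points from this full-measure set and then run the singularity argument at the level of $L^2$-norms on small neighbourhoods of the $y_j$.
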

\begin{proof}
By the Proposition \ref{bermcond} it is enough to check the condition \eqref{ltx^2} for $A=[0,T]$. Fix some $H_1,H_2\in (1/2,1)$ and abbreviate $Y = Y^{H_1,H_2}$, $H=(H_1+H_2)/2$. Since $Y$ is  $(H_1+H_2)/2$-selfsimilar and has stationary increments,
\begin{gather*}
\ex{e^{iz(X_t-X_s)}} = \ex{e^{izX_{\abs{t-s}}}} = \ex{e^{iz\abs{t-s}^H X_1}}.
\end{gather*}
We have $X_1 = I_2(\phi)$ with $$\phi(x,y)= \int_0^1 K_{H_1,H_2}(s,x,y)ds.$$ Therefore, by \eqref{charineq}
\begin{gather*}
\ex{e^{iz\abs{t-s}^H X_1}} = \ex{e^{iz\abs{t-s}^H I_2(\phi)}}\\
\le \left(1+64 z^6 \abs{t-s}^{6H}\lambda^2_{1,\phi}\lambda^2_{2,\phi}\lambda^2_{3,\phi}\right)^{-1/4}.
\end{gather*}
Now we will prove that $\operatorname{dim} M_\phi L^2(\R) =\infty$. Take any $a>0$ and set $f_a(x) = e^{ax}\ind{x\le 1}$. For any $h>0$, 
\begin{gather*}
\int_{\R}(s-y)_+^{h/2-1} f_a(y) dy = e^{as} \int_0^\infty t^{h/2-1}e^{-at}dt \\= a^{-h/2}\Gamma(h/2)e^{as}=: c_{a,h}e^{as}.
\end{gather*}
Therefore,
\begin{gather*}
(M_\phi f_\alpha)(x) = \int_{\R}\int_0^1 K_{H_1,H_2}(s,x,y)f_\alpha(y)dy=
\\
 \int_0^1 \left(c_{a,H_1}(s-x)_+^{\alpha_1}+c_{a,H_2}(s-x)_+^{\alpha_2}\right)e^{as}ds,
\end{gather*}
where $\alpha_i = H_i/2-1$, $i=1,2$.
In particular, $(M_\phi f_a)(x)$ decays as $e^{ax}\abs{x}^{\alpha_1 \vee \alpha_2}$ for  $x\to-\infty$. Hence the claim $\operatorname{dim} M_\phi L^2(\R) =\infty$ follows obviously.

Consequently, $\lambda = \lambda^2_{1,\phi}\lambda^2_{2,\phi}\lambda^2_{3,\phi}>0$, and 
\begin{gather*}
\int_\R\int_0^T\int_0^T \ex{e^{iz(X_t-X_s)}}ds\,dt\,dz\\
\le  \int_0^T\int_0^T\int_\R \left(1+64\lambda z^6\abs{t-s}^{6H}\right)^{-1/4}dz\,ds\,dt\\
= \int_0^T\int_0^T  \int_\R \frac{dz}{\left(1+\lambda z^6\right)^{1/4}}\frac{ds\,dt}{2\abs{t-s}^{H}}<\infty,
\end{gather*}
as required.
\end{proof}
From this theorem we can deduce existence of local time for multifractional generalized process exactly the same way as \cite[Theorem 3.2]{shevrosen} is deduced from \cite[Theorem 3.1]{shevrosen}. Nevertheless, for completeness we present the proof. 
\begin{theorem}
The multifractional generalized Rosenblatt process $X$ has a square integrable local time on $[0,T]$.
\end{theorem}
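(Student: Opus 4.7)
The plan is to verify Berman's condition from Proposition~\ref{bermcond} with $A=[0,T]$, namely that $\int_\R\int_0^T\int_0^T\ex{e^{iv(X_t-X_s)}}\,ds\,dt\,dv<\infty$. Writing $X_t-X_s=I_2(g_{t,s})$ for a symmetric kernel $g_{t,s}$, the estimate \eqref{charineq} reduces matters to a suitable lower bound on the top three squared eigenvalues of the Hermitian operator $M_{g_{t,s}}$, uniformly in $(s,t)$.

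The natural decomposition is $g_{t,s}=g^{(1)}_{t,s}+g^{(2)}_{t,s}$, where $g^{(1)}_{t,s}(x,y)=\int_s^t K_{H_1(t),H_2(t)}(u,x,y)\,du$ is the kernel of the frozen-parameter increment $Y^{\overline H(t)}_t-Y^{\overline H(t)}_s$, while $g^{(2)}_{t,s}$ is the kernel of $Y^{\overline H(t)}_s-Y^{\overline H(s)}_s$. By Lemma~\ref{lem:Hcont} together with the H\"older hypothesis on $H_i$, $\norm{g^{(2)}_{t,s}}^2_{L^2(\R^2)}\le C|t-s|^{2\gamma}$ with $\gamma>\MH$. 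For $g^{(1)}_{t,s}$ the self-similar substitutions $u=s+(t-s)u'$, $x=s+(t-s)x'$, $y=s+(t-s)y'$ identify the eigenvalues of $M_{g^{(1)}_{t,s}}$ with $(t-s)^{H(t)}\mu_k(t)$, where $\mu_k(t)$ are the eigenvalues of $M_{\phi_t}$, $\phi_t(x,y)=\int_0^1 K_{H_1(t),H_2(t)}(u,x,y)\,du$, $H(t)=(H_1(t)+H_2(t))/2$. The argument in Theorem~\ref{thm:lt} shows that $M_{\phi_t}$ has infinite-dimensional range for every $t$, and since $\phi_t$ depends continuously on $t$ in $L^2(\R^2)$, compactness of $[0,T]$ yields $\mu_*:=\inf_{t\in[0,T]}\min_{k\le 3}|\mu_k(t)|>0$.

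The Weyl inequality for singular values, via $\norm{M_{g^{(2)}_{t,s}}}_{\mathrm{op}}\le\norm{g^{(2)}_{t,s}}_{L^2}$, gives $\bigl||\lambda_{k,g_{t,s}}|-|\lambda_{k,g^{(1)}_{t,s}}|\bigr|\le C|t-s|^\gamma$. Because $\gamma>\MH\ge H(t)$, there exists $\delta_0>0$ such that for all $s,t$ with $|t-s|\le\delta_0$ one has $|\lambda_{k,g_{t,s}}|\ge\tfrac12(t-s)^{H(t)}\mu_*$ for $k=1,2,3$, and hence $\prod_{k=1}^3\lambda^2_{k,g_{t,s}}\ge c(t-s)^{6\MH}$ (using $|t-s|\le 1$). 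Plugging into \eqref{charineq} and integrating in $v$, $\int_\R|\ex{e^{iv(X_t-X_s)}}|\,dv\le C(t-s)^{-\MH}$, which is integrable over $\{(s,t)\in[0,T]^2:|t-s|\le\delta_0\}$ since $\MH<1$. The complementary region $\{(s,t):\delta_0<|t-s|\le T\}$ is compact, the kernel $g_{t,s}$ is continuous there in $L^2(\R^2)$, and $M_{g_{t,s}}$ is pointwise nondegenerate (by the same exponential-test-function trick $f_a(x)=e^{ax}\mathbf{1}_{x\le s}$ used in Theorem~\ref{thm:lt}); consequently $\prod_{k=1}^3\lambda^2_{k,g_{t,s}}$ admits a uniform positive lower bound, giving a $v$-integrable bound on the characteristic function whose integral over this compact region is finite.

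The main obstacle is securing the uniform three-eigenvalue lower bound. The self-similar scaling reduces the small-gap case to uniform positivity of the $\mu_k(t)$, handled by continuity plus compactness; crucially, the hypothesis $\gamma>\MH$ is exactly what makes the multifractional correction $g^{(2)}_{t,s}$ subleading with respect to $g^{(1)}_{t,s}$ for small $|t-s|$, so that Weyl's perturbation estimate can be applied. For gaps bounded below by $\delta_0$ the argument is purely compactness-continuity plus a pointwise nondegeneracy check, mirroring the role of the exponential-test-function argument in the preceding theorem.
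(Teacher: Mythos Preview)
Your plan coincides with the paper's argument in all essentials: the same decomposition $g_{t,s}=g^{(1)}_{t,s}+g^{(2)}_{t,s}$ into frozen-parameter increment plus multifractional correction, the self-similar identification $\lambda_{k,g^{(1)}_{t,s}}=(t-s)^{H(t)}\lambda_{k,\phi_{\overline H(t)}}$, the bound $\norm{g^{(2)}_{t,s}}\le C\abs{t-s}^\gamma$ from Lemma~\ref{lem:Hcont}, and the Weyl-type perturbation estimate combined with continuity/compactness to get $\abs{\lambda_{k,g_{t,s}}}\ge C\abs{t-s}^{\MH}$ for $k=1,2,3$ when $\abs{t-s}$ is small.

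The one organizational difference is that the paper invokes additivity of the local time in $A$ at the outset, reducing the Berman condition to $A=[s,t]$ with $t-s$ small; this eliminates your separate treatment of the region $\{\abs{t-s}>\delta_0\}$ entirely. Your compactness argument there is reasonable in spirit, but two points would need tightening: the test functions should be cut off at $T$ (or at least at $\max(s,t)$), not at $s$, for the integral computation from Theorem~\ref{thm:lt} to go through unchanged; and the pointwise nondegeneracy of $M_{g_{t,s}}$ when $\overline H(t)\neq\overline H(s)$ is not quite as immediate as for the single-parameter kernel $\phi$, since $g_{t,s}$ now mixes two different exponent pairs. The paper's additivity shortcut is cleaner precisely because it sidesteps this verification.
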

\begin{proof}
In view of the additivity of the local time in $A$, it is enough to check the assumption \eqref{ltx^2} for $A=[s,t]$, with $t-s$ small enough. 

To this end, denote $Y^{\overline H}_t = Y^{H_1,H_2}_t$,
$$
f_{\overline H}(t,x,y) = \int_0^t K_{H_1,H_2}(u,x,y)du
$$
and write 
\begin{gather*}
\abs{\ex{e^{iz(X_t-X_s)}}} = \abs{\ex{e^{iz I_2(g)}}}\\ \le \left(1+64 z^6 \lambda^2_{1,g}\lambda^2_{2,g}\lambda^2_{3,g}\right)^{-1/4},
\end{gather*}
where $g(x,y) = f_{\overline H(t)}(t,x,y)-f_{\overline H(s)}(s,x,y)$ is the kernel corresponding to $X_t-X_s$. It can be represented as $g_1(x,y)+g_2(x,y)$, where 
\begin{align*}
g_1(x,y)&= f_{\overline H(t)}(t,x,y)-f_{\overline H(t)}(s,x,y),\\ g_2(x,y)&= f_{\overline H(t)}(s,x,y)-f_{\overline H(s)}(s,x,y).
\end{align*}
Evidently, \begin{equation}\label{lakg}
\abs{\lambda_{k,g}}\ge \abs{\lambda_{k,g_1}} - \norm{M_{g_2}}\ge \abs{\lambda_{k,g_1}} - \norm{g_2}.
\end{equation}
It follows from  Lemma~\ref{lem:Hcont} that $\norm{g_2}\le C(t-s)^\gamma$. 
On the other hand,
$I_2(g_1)=Y_t^{\overline H(t)}-Y_s^{\overline H(t)}$ is distributed as $(t-s)^{H(t)}Y_1^{\overline H(t)}$ with $H(t) = (H_1(t)+H_2(t))/2$. Therefore,
$$\lambda_{k,g_1} = (t-s)^{H(t)}\lambda_{k,\phi_{\overline H(t)}}\ge C (t-s)^{\MH}\lambda_{k,\phi_{\overline H(t)}},$$
where $\phi_{\overline H(t)}$ is the kernel corresponding to $Y_1^{\overline H(t)}$. \selectlanguage{ukrainian} From the proof of Theorem~\ref{thm:lt} it follows that $\lambda_{k,\phi_{H(t)}}>0$,
$k=1,2,3$. In view of continuity, $\lambda_{k,g_1} \ge C (t-s)^{\MH}$ for $k=1,2,3$. Since $\abs{\lambda_{k,f_1}-\lambda_{k,f_2}}\le \norm{f_1-f_2}$, then  the estimates $\norm{g_2}\le C(t-s)^\gamma = o((t-s)^{\MH})$ with \eqref{lakg} yield $\abs{\lambda_{k,g}}\ge C (t-s)^{\MH}$, $k=1,2,3$, whenever $t-s$ is small enough. Thus,
\begin{gather*}
\ex{e^{iz(X_t-X_s)}} \le \left(1+ C z^6 (t-s)^{6\MH} \right)^{-1/4},
\end{gather*}
and the rest of proof goes as for Theorem~\ref{thm:lt}.
\end{proof}

\begin{thebibliography}{10}
\def\selectlanguageifdefined#1{
\expandafter\ifx\csname date#1\endcsname\relax
\else\selectlanguage{#1}\fi}
\providecommand*{\href}[2]{{\small #2}}
\providecommand*{\url}[1]{{\small #1}}
\providecommand*{\BibUrl}[1]{\url{#1}}
\providecommand{\BibAnnote}[1]{}
\providecommand*{\BibEmph}[1]{#1}

\bibitem{BenassiJaffardRoux97}
\selectlanguageifdefined{english}
\BibEmph{Benassi,~A.} {Gaussian processes and pseudodifferential elliptic
  operators}~/ A.~Benassi, S.~Jaffard, D.~Roux~// \BibEmph{Revista Mathematica
  Iberoamericana}. ---
\newblock 1997. ---
\newblock Vol.~13, no.~1. ---
\newblock P.~19--89.

\bibitem{berman}
\selectlanguageifdefined{english}
\BibEmph{Berman,~S.} {Local nondeterminism and local times of Gaussian
  processes.}~/ S.M.~Berman~// \BibEmph{Indiana Univ. Math. J.} ---
\newblock 1973. ---
\newblock V.~23. ---
\newblock P.~69--94.

\bibitem{janson}
\selectlanguageifdefined{english}
\BibEmph{Janson,~S.} Gaussian {H}ilbert spaces~/ S.~Janson. ---
\newblock Cambridge: Cambridge University Press, 1997.  ---
\newblock x+340~P.

\bibitem{tudor-maejima}
\selectlanguageifdefined{english}
\BibEmph{Maejima,~M.} Selfsimilar processes with stationary increments in the
  second {W}iener chaos~/ M.~Maejima, C.~A.~Tudor~// \BibEmph{Probab. Math.
  Statist.} ---
\newblock 2012. ---
\newblock V.~32, no.~1. ---
\newblock P.~167--186.

\bibitem{nualart}
\selectlanguageifdefined{english}
\BibEmph{Nualart,~D.} The {M}alliavin calculus and related topics~/
  D.~Nualart. ---
\newblock Berlin: Springer-Verlag, 2006. ---
\newblock xiv+382~P.

\bibitem{PeltierLevyVehel}
\selectlanguageifdefined{english}
\BibEmph{Peltier,~R.~F.} {Multifractional Brownian motion: definition and
  preliminary results}~/ R.~F.~Peltier, J.~{L\'evy V\'ehel}~// \BibEmph{INRIA
  research report}. ---
\newblock 1995. ---
\newblock no. 2645.

\bibitem{ralchenko}
\selectlanguageifdefined{english}
\BibEmph{Ralchenko,~K.~V.} {Path properties of multifractal Brownian motion}~/
  K.~V.~Ralchenko, G.~M.~Shevchenko~// \BibEmph{Theor. Probability and Math.
  Statist.} ---
\newblock 2010. ---
\newblock V.~80. ---
\newblock P.~119--130.

\bibitem{rosenblatt}
\selectlanguageifdefined{english}
\BibEmph{Rosenblatt,~M.} {Independence and dependence}/ M. Rosenblatt~//
\newblock {Proc. 4th Berkeley Symp. Math. Stat. Probab. 2.}  ---
\newblock 1961. --- P.~431-443.

\bibitem{shevrosen}
\selectlanguageifdefined{English}
\BibEmph{Shevchenko,~G.} {Properties of trajectories of a multifractional
  Rosenblatt process.}~/ G.~Shevchenko~// \BibEmph{Theory Probab.
  Math. Stat.} ---
\newblock 2011. ---
\newblock V.~83. ---
\newblock P.~163--173.

\bibitem{tudor-torres}
\selectlanguageifdefined{english}
\BibEmph{Torres,~S.} {Donsker type theorem for the Rosenblatt process and a
  binary market model.}~/ S.~Torres, C.~A.~Tudor~//
  \BibEmph{Stochastic Anal. Appl.} ---
\newblock 2009. ---
\newblock V.~27, no.~3. ---
\newblock P.~555--573.

\end{thebibliography}

\begin{flushright}
Received: 1.07.2013
\end{flushright}
\end{multicols}
\end{document}